\title{Kochen-Specker sets and Hadamard matrices}
\author{
Petr Lison\v{e}k\thanks{Research was supported
in part by the Natural Sciences and Engineering Research Council of Canada
(NSERC).}
\\
Department of Mathematics\\
Simon Fraser University\\
Burnaby, BC, V5A 1S6\\
Canada\\
\  \\
{\tt plisonek@sfu.ca}
}
\newtheorem{theorem}{Theorem}[section]
\newtheorem{proposition}[theorem]{Proposition}
\theoremstyle{definition}
\newtheorem{definition}[theorem]{Definition}
\newtheorem{example}[theorem]{Example}
\def\la{\langle}
\def\ra{\rangle}
\def\Z{{\mathbb Z}}
\def\C{{\mathbb C}}
\def\V{{\mathcal V}}
\def\B{{\mathcal B}}
\def\z{\zeta}
\begin{document}

\maketitle

\begin{abstract}
We introduce a new class of complex Hadamard matrices
which have not been studied previously. We use these matrices
to construct a new infinite family of parity proofs
of the Kochen-Specker theorem. 
We show that the recently discovered simple parity proof
of the Kochen-Specker theorem is the initial member of this infinite family.
\end{abstract}

\section{Introduction}
Kochen-Specker theorem is an important result in quantum mechanics 
\cite{KS-paper}.
It demonstrates the contextuality of quantum mechanics,
which is one of its properties that may become crucial
in quantum information theory \cite{How}.
In this paper we focus 
on proofs of Kochen-Specker theorem that are given by showing
that, for $n\ge 3$, 
there does not exist a function $f:\C^n\rightarrow \{0,1\}$
such that for every orthogonal basis $B$ of $\C^n$ there
exists {\em exactly one} vector $x\in B$ such that $f(x)=1$
(where $\C^n$ denotes the $n$-dimensional vector space over the field
of complex numbers).
This particular approach has been used in many publications,
see for example \cite{Adan-18,Lis-PRA,WA,WA-preprint} 
and many references cited therein.
The following definition formalizes 
one common way of constructing such proofs.

\begin{definition}
\label{def-KS-pair}
We say that $(\V,\B)$ is a {\em Kochen-Specker pair in $\C^n$}
if it meets the following conditions:
\begin{itemize}
\item[(1)] 
$\V$ is a finite set of vectors in $\C^n$.
\item[(2)]
$\B=(B_1,\ldots,B_k)$ where $k$ is odd,
and for all $i=1,\ldots,k$
we have that $B_i$ is an orthogonal basis of $\C^n$
and $B_i\subset \V$.
\item[(3)] 
For each $v\in\V$ the number of $i$ such that $v\in B_i$ is even.
\end{itemize}
\end{definition}

Let us show that the existence of a Kochen-Specker pair 
demonstrates
the non-existence of a function $f$ with the properties given above.
Towards a contradiction suppose that  $(\V,\B)$ satisfies 
Definition~\ref{def-KS-pair}
and $f:\C^n\rightarrow \{0,1\}$ has the properties specified above.
Denote $V_1=\{ x\in\V : f(x)=1\}$.
By conditions (2) and (3) and by properties of $f$,
the number of $i$ such that $|B_i\cap V_1|=1$
is even. Since the length of the list $\B$ is odd, 
there exists an $i$ such that $|B_i\cap V_1|\neq 1$, in contradiction
to the required properties of $f$. Since this contradiction is based
on a parity argument, the Kochen-Specker pairs introduced
in Definition~\ref{def-KS-pair}
are often called ``parity proofs of the Kochen-Specker theorem.''

It is quite common in the literature 
\cite{T7-experiment,Lis-PRA,WA-preprint}
to refer to a Kochen-Specker pair 
as {\em Kochen-Specker set,} and we will do so sometimes in this paper.
Kochen-Specker sets
are key tools for proving some fundamental results in quantum theory 
and they also have 
various potential applications in quantum information processing
\cite{T7-experiment}.  
In Section~\ref{sec-KS-construction}
we give a construction of
a family of Kochen-Specker sets in infinitely many different dimensions.
Before that, in Section~\ref{sec-SL-Had} we introduce a new class
of complex Hadamard matrices,
which are used in our construction, and
they may be also an interesting object of study on their own.
In Section~\ref{sec-conclusion} we draw some conclusions from our results.

\section{S-Hadamard matrices}
\label{sec-SL-Had}

For $z\in\C$ let $\overline z$ denote its complex conjugate.
We work with the usual inner product on $\C^n$
defined by $\la x,y\ra=\sum_{i=1}^n x_i\overline{y_i}$.
For a complex matrix $H$, let $H^*$ denote its conjugate transpose.
Let $I_n$ denote the $n\times n$ identity matrix.
We say that a complex number $z$ is {\em unimodular} if $|z|=1$.
We say that a vector $x\in\C^n$ is  {\em unimodular} if each coordinate
of $x$ is unimodular.

\begin{definition}
\label{def-SL-Had-m}
An $n\times n$ matrix $H=(h_{i,j})$ whose entries are complex numbers
is called {\em S-Hadamard matrix of order $n$} if it meets the following
conditions:
\begin{itemize}
\item[(1)]
$HH^*=nI_n$
\item[(2)]
$|h_{i,j}|=1$ for all $1\le i,j\le n$
\item[(3)]
for each $1\le k, \ell \le n$, $k\neq \ell$, we have 
$\sum_{j=1}^n h_{k,j}^2 \overline{h_{\ell,j}^2} =0$.
\end{itemize}
\end{definition}

Conditions (1) and (2) are the definition of Hadamard matrices
which are prominent objects 
in combinatorial design theory
that have been studied since the 19th century 
\cite{Horadam}, first as matrices over $\{-1,1\}$ and then
more generally as matrices over complex numbers.
Condition (3) appears to be a new condition not previously seen
in the literature. The proposed name {\em S-Hadamard matrix}
reflects the form of this new condition, which involves
{\bf \underline{s}}quares of the entries of~$H$.
All three conditions in Definition \ref{def-SL-Had-m}
are required for our construction
of Kochen-Specker pairs given in Theorem \ref{thm-Had-KS}.

Throughout this paper
we use additive notation for group operations.
By $\Z_g$ we denote the cyclic group of integers modulo~$g$.

\begin{definition}\cite[Definition 5.1]{handbook-GH}
\label{def-GH-mat}
Let $G$ be a group of order $g$ and let $\lambda$ be a positive integer.
A {\em generalized Hadamard matrix over $G$}
is a $g\lambda \times g\lambda$ matrix $M=(m_{i,j})$
whose entries are elements of $G$
and for each $1\le k<\ell\le g\lambda$, each element of $G$
occurs exactly $\lambda$ times among the differences $m_{k,j}-m_{\ell,j}$,
$1\le j\le g\lambda$.
Such matrix is denoted {\rm GH}$(g,\lambda)$.
\end{definition}

\begin{proposition}
\label{proposition-GH-SH}
Suppose that $g>2$ and {\rm GH}$(g,\lambda)$ over $\Z_g$ exists. 
Then there exists an S-Hadamard matrix of order $g\lambda$.
\end{proposition}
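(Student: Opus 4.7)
The plan is to construct $H$ directly from the generalized Hadamard matrix by applying a character of $\Z_g$ entrywise. Let $M=(m_{i,j})$ be a given $\mathrm{GH}(g,\lambda)$ over $\Z_g$ and let $\zeta=e^{2\pi i/g}$, a primitive $g$-th root of unity. Define the $g\lambda\times g\lambda$ complex matrix $H=(h_{i,j})$ by $h_{i,j}=\zeta^{m_{i,j}}$. Since each $h_{i,j}$ is a root of unity, condition (2) of Definition~\ref{def-SL-Had-m} is automatic.

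The next step is to verify conditions (1) and (3) by reducing each to a character sum over $\Z_g$. For any integer $c$, by Definition~\ref{def-GH-mat} the difference $m_{k,j}-m_{\ell,j}$ (with $k\neq\ell$) takes each value in $\Z_g$ exactly $\lambda$ times as $j$ ranges over $\{1,\ldots,g\lambda\}$, so
\[
\sum_{j=1}^{g\lambda} \zeta^{c(m_{k,j}-m_{\ell,j})} \;=\; \lambda\sum_{a\in\Z_g}\zeta^{ca}.
\]
The inner sum vanishes precisely when $\zeta^c\neq 1$, i.e.\ when $c\not\equiv 0\pmod g$. Condition (1) then follows by taking $c=1$ (and, for the diagonal entries, noting that $\sum_j |h_{k,j}|^2=g\lambda$), which gives $HH^*=g\lambda I_{g\lambda}$.

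For condition (3) I apply the same identity with $c=2$, obtaining
\[
\sum_{j=1}^{g\lambda} h_{k,j}^2\,\overline{h_{\ell,j}^2} \;=\; \lambda\sum_{a\in\Z_g}\zeta^{2a}.
\]
This vanishes exactly when $2\not\equiv 0\pmod g$, which is where the hypothesis $g>2$ enters and is used in an essential way: for $g=2$ we have $\zeta^2=1$ and the sum equals $g\lambda\neq 0$, so condition (3) would fail. For $g>2$ the condition $2\not\equiv 0\pmod g$ holds and the sum is zero, completing the verification.

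There is no real obstacle here; the only subtle point is pinning down exactly where the assumption $g>2$ is needed, which is why I would organize the proof around the single lemma that $\sum_{a\in\Z_g}\zeta^{ca}=0$ iff $c\not\equiv 0\pmod g$ and then apply it with $c=1$ and $c=2$. This also explains conceptually why the condition (3) in Definition~\ref{def-SL-Had-m} is a genuine extra restriction beyond ordinary Hadamardness: it amounts to asking that the character $a\mapsto\zeta^{2a}$ also be nontrivial on the additive group generated by the row differences.
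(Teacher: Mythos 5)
Your proposal is correct and follows essentially the same route as the paper: both construct $H$ by exponentiating a primitive $g$-th root of unity entrywise and reduce conditions (1) and (3) to the character sums $\lambda\sum_{a\in\Z_g}\zeta^{a}$ and $\lambda\sum_{a\in\Z_g}\zeta^{2a}$, with $g>2$ guaranteeing $\zeta^2\neq 1$ in the latter. Your unification of the two cases into a single lemma with parameter $c$ is a tidy repackaging but not a different argument.
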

\begin{proof}
Assume that $M=(m_{i,j})$ is a GH$(g,\lambda)$ over $\Z_g$.
Let $\z_g=e^{2\pi\sqrt{-1}/g}$ be the primitive $g$-th root of unity
in $\C$. 
Define the 
$g\lambda \times g\lambda$ matrix $H=(h_{i,j})$ by $h_{i,j}=\z_g^{m_{i,j}}$
for all $i,j$.
We claim that $H$ is an S-Hadamard matrix of order $g\lambda$.
We will verify that the three conditions
in Definition~\ref{def-SL-Had-m} are satisfied.
Let $h_k$ and $h_{\ell}$ be two rows of $H$, $k\neq \ell$. Then
\[
\la h_k,h_{\ell}\ra=\sum_{j=1}^{g\lambda} \z_g^{m_{k,j}-m_{\ell,j}}
=\lambda\sum_{i=0}^{g-1}\z_g^i=0
\]
and $\la h_k,h_k\ra =\sum_{j=1}^{g\lambda} |h_{k,j}|^2=g\lambda$,
thus condition (1)
is satisfied. Condition (2) 
follows from the construction of~$H$.
To verify condition (3) we compute
\[
\sum_{j=1}^{g\lambda} h_{k,j}^2 \overline{h_{\ell,j}^2}
=\sum_{j=1}^{g\lambda} \z_g^{2(m_{k,j}-m_{\ell,j})}
=\lambda\sum_{i=0}^{g-1}\z_g^{2i}
=\lambda\frac{\z_g^{2g}-1}{\z_g^2-1}=0
\]
where we used the assumption that $g>2$, hence $\z_g^2\neq 1$.
\end{proof}

It would be interesting to find other constructions
of S-Hadamard matrices, and we pose this as an open problem.

\begin{example}
\label{example-GH-exist}
The construction
given in our main result (Theorem~\ref{thm-Had-KS}) requires
the existence an S-Hadamard matrix of {\em even} order.
For illustration let us consider small even orders.
Proposition \ref{proposition-GH-SH} allows one
to construct an S-Hadamard matrix of order $n$
for all even $0<n\le 100$ except
$n=2, 4, 8, 32, 40, 42, 60, 64, 66, 70, 78, 84, 88$.
The underlying generalized Hadamard matrices are obtained
by constructions
given in \cite{handbook-GH},
moreover for $n=16$ examples of GH(4,4) over $\Z_4$
are given in \cite{Harada}.
By consulting Table~6.1 in \cite{Lampio}
we see that no other suitable GH$(g,\lambda)$ over $\Z_g$ are known
for $g\le 7$ and $\lambda\le 10$.
For some of the values of $n$ which we excluded above,
the existence of a suitable generalized Hadamard matrix
is an open problem, and it is possible that 
S-Hadamard matrices of those orders may exist.
\end{example}

\section{An infinite family of Kochen-Specker sets}
\label{sec-KS-construction}

S-Hadamard matrices introduced in the previous section
will now be used to construct an infinite family of Kochen-Specker sets.

\begin{theorem}
\label{thm-Had-KS}
Suppose that there exists an S-Hadamard matrix of order~$n$
where $n$ is even.
Then there exists a Kochen-Specker pair $(\V,\B)$ in $\C^n$
such that $|\V| \le {{n+1}\choose 2}$ and $|\B|=n+1$.
\end{theorem}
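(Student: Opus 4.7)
The plan is to build $n+1$ orthogonal bases of $\C^n$, labelled by an index set $I=\{1,\ldots,n,\star\}$, so that any two of them share exactly one vector. Combinatorially this is the edge-structure of the complete graph $K_{n+1}$: bases correspond to vertices, shared vectors to edges. Once such a design exists, the Kochen--Specker conditions fall out automatically, because every vector then lies in exactly two bases (parity condition~(3) of Definition~\ref{def-KS-pair}), $|\B|=n+1$ is odd (as $n$ is even), and $|\V|\le\binom{n+1}{2}$.

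Let $r_1,\ldots,r_n$ be the rows of the S-Hadamard matrix $H$, write $u\circ v$ for the coordinate-wise (Hadamard) product in $\C^n$, and set $r_k^{(2)}:=r_k\circ r_k$. My candidate is
\[
B_k \,=\, \{\, r_k\circ r_j \,:\, j=1,\ldots,n\,\} \quad (k=1,\ldots,n),\qquad B_\star \,=\, \{\, r_1^{(2)},\ldots,r_n^{(2)}\,\},
\]
so the shared vector of $B_k$ and $B_\ell$ is $r_k\circ r_\ell$ (for $k,\ell\le n$, $k\neq\ell$) and the shared vector of $B_k$ and $B_\star$ is $r_k^{(2)}$.

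The heart of the proof is verifying orthogonality of each basis. A direct computation using unimodularity (condition~(2) of Definition~\ref{def-SL-Had-m}) gives
\[
\la r_k\circ r_\ell,\, r_k\circ r_m\ra \,=\, \sum_{j=1}^n |r_{k,j}|^2\, r_{\ell,j}\,\overline{r_{m,j}} \,=\, \la r_\ell,r_m\ra,
\]
which vanishes for $\ell\neq m$ by condition~(1), showing that $B_k$ is orthogonal (with each vector of norm $\sqrt{n}$). Orthogonality of $B_\star$ is exactly condition~(3) of Definition~\ref{def-SL-Had-m}. For the parity count I would use the small lemma: if $r_a\circ r_b=r_p\circ r_q$ and $\{a,b\}\cap\{p,q\}\neq\emptyset$, then dividing out the shared coordinate factor (valid by unimodularity) and using that the rows of $H$ are distinct (pairwise orthogonal) forces $\{a,b\}=\{p,q\}$. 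Hence coincidences among the $\binom{n+1}{2}$ nominal vectors can only pool together disjoint index sets, and the membership count of every $v\in\V$ is twice the number of distinct representations---always even.

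The main obstacle, and the precise place where S-Hadamard (rather than merely Hadamard) is required, is the orthogonality of $B_\star$: the ``finite'' bases $B_1,\ldots,B_n$ are orthogonal for any complex Hadamard matrix, by a computation using only conditions~(1) and~(2), but adjoining the extra basis of squared rows needed to close up the $K_{n+1}$-design on $n+1$ labels is exactly what forces condition~(3) of the definition.
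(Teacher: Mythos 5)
Your proposal is correct and is essentially the paper's own construction: the same $K_{n+1}$ edge-design with vectors $h_k\circ h_\ell$ plus the basis of squared rows, the same orthogonality computations, and the same parity argument via pairwise-orthogonal (hence distinct) vectors within each basis. The only cosmetic difference is that the paper first normalizes the first row of $H$ to $\mathbf{1}$ and labels the extra basis as $B_2$ rather than $B_\star$, which makes your uniform formula $B_k=\{r_k\circ r_j\}$ coincide with its case-by-case definition.
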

\begin{proof}
First we construct the set $\V$. Let the elements of $\V$
be denoted $v^{\{r,s\}}$ where $1\le r,s\le n+1$, $r\neq s$.
Note that we use the standard convention that sets are unordered,
hence $v^{\{r,s\}}$ and $v^{\{s,r\}}$ denote the same element of $\V$,
for all $r\neq s$. 
For $x,y\in \C^n$
we define $x\circ y=(x_1y_1,\ldots,x_ny_n)$.

Let $H=(h_{i,j})$ be the S-Hadamard matrix of order~$n$
whose existence is assumed,
and let $h_i$ denote the $i$-th row of $H$. 
Without loss of generality we can assume that $h_1$
is the all-one vector, denoted $\bf 1$.
If this is not the case, then replace each entry $h_{ij}$ of $H$
with $h_{ij}h_{1j}^{-1}$; this operation preserves all
conditions of Definition~\ref{def-SL-Had-m}. 

We construct the elements of $\V$
as follows:
\begin{itemize}
\item
For $1 < s\le n+1$ let $v^{\{1,s\}}=h_{s-1}$.
\item
For $2 < s\le n+1$ let $v^{\{2,s\}}=h_{s-1}\circ h_{s-1}$.
\item
For $2 < r <s \le n+1$ let $v^{\{r,s\}}=h_{r-1}\circ h_{s-1}$.
\end{itemize}

For $1\le r\le n+1$ let $B_r=\{ v^{\{r,i\}}\;:\; 1\le i\le n+1,\; i\neq r\}$,
and let $\B=(B_1,\ldots,B_{n+1})$. We will now prove that each $B_r$ 
is an orthogonal basis of $\C^n$. Note that for $x,y,z\in\C^n$
such that $z$ is unimodular we have
\begin{equation}
\label{eq-circ-orth}
\la z\circ x, z\circ y\ra = 
\la x\circ z, y\circ z\ra = \sum_{i=1}^n x_iz_i\overline{y_iz_i}=\la x,y \ra.
\end{equation}
Since distinct rows of $H$ are orthogonal
and all rows of $H$ are unimodular, 
equation (\ref{eq-circ-orth}) proves 
\begin{equation*} 
\la v^{\{r,s\}},v^{\{r,t\}}\ra
=
\la h_{r-1}\circ h_{s-1}, h_{r-1}\circ h_{t-1}\ra
=
\la h_{s-1}, h_{t-1}\ra
=0
\end{equation*} 
whenever 
\begin{equation} 
2< r,s,t\le n+1
\mbox{\ \ and\ \ } r,s,t \mbox{\ are\ distinct.}
\label{eq-easy-rst}
\end{equation} 

We will now prove the desired orthogonality relations
$\la v^{\{r,s\}},v^{\{r,t\}}\ra =0$
for those pairs
of vectors $v^{\{r,s\}},v^{\{r,t\}}$
which are not covered by condition (\ref{eq-easy-rst}).
We will split the proof into cases according to the value of~$r$.

Let $r=1$.
For $1< s < t \le n+1$ we have
\[
\la v^{\{1,s\}},v^{\{1,t\}}\ra 
=
\la h_{s-1},h_{t-1}\ra 
=
0.
\] 

Now let $r=2$. 
For $2< s < t \le n+1$ we have
\[
\la v^{\{2,s\}},v^{\{2,t\}}\ra 
=\la h_{s-1}\circ h_{s-1}, h_{t-1}\circ h_{t-1}\ra = 
\sum_{j=1}^n h_{s-1,j}^2 \overline{h_{t-1,j}^2} =0
\]
by condition (3) in Definition~\ref{def-SL-Had-m}.
For $2< t \le n+1$ we have
\[
\la v^{\{2,1\}},v^{\{2,t\}}\ra 
= \la {\bf 1} , h_{t-1}\circ h_{t-1} \ra 
= \sum_{j=1}^n \overline{h_{t-1,j}^2} =0
\]
by condition (3) in Definition~\ref{def-SL-Had-m},
applied with $k=1$.

Now let $2< r \le n+1$.
For $t>2$, $t\neq r$ we have 
\begin{eqnarray*}
\la v^{\{r,1\}},v^{\{r,t\}}\ra 
&=& 
\la h_{r-1} , h_{r-1}\circ h_{t-1} \ra
= \la h_{r-1}\circ h_1 , h_{r-1}\circ h_{t-1} \ra
= \\
&=&\la h_1, h_{t-1} \ra =0
\end{eqnarray*}
as well as
\[
\la v^{\{r,2\}},v^{\{r,t\}}\ra 
= 
\la h_{r-1}\circ h_{r-1} , h_{r-1}\circ h_{t-1} \ra
= \la h_{r-1},h_{t-1}\ra =0.
\]
Finally we have 
\begin{eqnarray*}
\la v^{\{r,1\}},v^{\{r,2\}}\ra 
&=& \la h_{r-1} , h_{r-1}\circ h_{r-1} \ra
\\
&=& \la h_1 \circ h_{r-1} , h_{r-1}\circ h_{r-1} \ra = 
\la h_1, h_{r-1} \ra = 0.
\end{eqnarray*}

We note that $|\B|=n+1$ is odd since $n$ is assumed to be even.
We will complete the proof by verifying that 
condition (3) in Definition \ref{def-KS-pair} is satisfied.
If the mapping $\{i,j\}\mapsto v^{\{i,j\}}$ is injective,
then each $v^{\{i,j\}}$ belongs to exactly two entries of $\B$,
namely $B_i$ and $B_j$. If the list $(v^{\{i,j\}})_{1\le i<j\le n+1}$ 
contains repeated vectors,
then let $x$ be a vector that occurs exactly $t$ times in this list.
Then by the previous argument $x$ belongs to exactly $2t$ entries of $\B$,
since for distinct $i,j,k$ we have $v^{\{i,j\}}\neq v^{\{i,k\}}$ as 
$\la v^{\{i,j\}}, v^{\{i,k\}}\ra =0$.
\end{proof}

There are infinitely many dimensions $n$ to which
Theorem \ref{thm-Had-KS} applies.
By considering Proposition \ref{proposition-GH-SH},
a sufficient condition for 
applying Theorem \ref{thm-Had-KS} in an even dimension~$n$
is the existence of GH$(g,n/g)$ over $\Z_g$ for some $g>2$.
The simplest forms of an
infinite family for which this condition is
satisfied are $n=2^kp^m$ where $k\in\{1,2\}$, $p$ is an odd prime
and $m\ge 1$, or $n=8p$ where $p>19$ is prime.
The constructions for the underlying generalized Hadamard
matrices can be found by consulting Table~5.10 in \cite{handbook-GH}.
Furthermore, any known generalized Hadamard
matrices can be used as ingredients to recursive constructions
given in Theorems 5.11 and 5.12 in  \cite{handbook-GH}.
As these recursive constructions can be applied repeatedly,
it is impossible to give a closed form for all $n$
to which Theorem \ref{thm-Had-KS} applies.
For $n\le 100$ such $n$ are listed in Example~\ref{example-GH-exist} above.

\section{Conclusion}
\label{sec-conclusion}

A Kochen-Specker pair $(\V,\B)$ in $\C^6$ with $|\V|=21$ and $|\B|=7$
was recently discovered \cite{Lis-PRA}.
It was noted \cite{T7-experiment} as the 
{\em simplest} Kochen-Specker pair (KS pair)
since it strictly minimizes the cardinality of $\B$ among
all known KS~pairs $(\V,\B)$,
see \cite{WA-preprint}. This KS pair was originally found
by computer search and its internal structure has not been 
fully studied yet. In this paper we have revealed the
structure of this KS~pair, since it is obtained 
by applying Theorem \ref{thm-Had-KS} 
in the smallest possible dimension $n=6$.
We have discovered an application of generalized Hadamard matrices
to the construction of KS pairs,
and we have proposed a new class of Hadamard matrices
as the suitable domain for such constructions.


\begin{thebibliography}{10}


\bibitem{Adan-18}
A.~Cabello, A proof with 18 vectors of the Bell-Kochen-Specker theorem. 
In: M.~Ferrero and A.~van der Merwe (Eds.), 
New Developments on Fundamental Problems in Quantum Physics. 
Kluwer Academic, Dordrecht, Holland, 1997, pp.~59--62.

\bibitem{T7-experiment}
G.~Ca\~{n}as, M.~Arias, 
S.~Etcheverry, 
E.S.~G\'{o}mez, A.~Cabello, G.B.~Xavier, G.~Lima,
Applying the simplest Kochen-Specker set for quantum information processing.
Phys.\ Rev.\ Lett.\ {\bf 113} (2014), 090404.


\bibitem{Harada}
M.~Harada, C.~Lam, V.D.~Tonchev, 
Symmetric (4,4)-nets and generalized Hadamard matrices 
over groups of order~4. Des.\ Codes\ Cryptogr.\ {\bf 34} (2005), 71--87. 


\bibitem{Horadam}
K.J.~Horadam, Hadamard Matrices and Their Applications. 
Princeton University Press, Princeton, NJ, 2007. 

\bibitem{How}
M.~Howard, J.~Wallman, V.~Veitch, J.~Emerson,
Contextuality supplies the `magic' for quantum computation.
Nature {\bf 510} (2014), 351--355.


\bibitem{KS-paper}
S.~Kochen, E.P.~Specker, 
The problem of hidden variables in quantum mechanics. 
Journal of Mathematics and Mechanics {\bf 17} (1967), 59--87.

\bibitem{Lampio}
P.H.J.~Lampio, 
Classification of difference matrices and complex Hadamard matrices.
Dissertation, Aalto University, Helsinki, Finland, 2015.
https://aaltodoc.aalto.fi/handle/123456789/18228
(Retrieved 30 November 2017)


\bibitem{handbook-GH}
W.~de Launey, Generalized Hadamard matrices.
In: C.J.~Colbourn and J.H.~Dinitz (Eds.),
Handbook of Combinatorial Designs. Second edition. Chapman \& Hall/CRC, Boca Raton, FL, 2007, pp.~301--306.

\bibitem{Lis-PRA}
P.~Lison\v{e}k, P.~Badzi\c{a}g, J.R.~Portillo, A. Cabello,
Kochen-Specker set with seven contexts. 
Phys. Rev. A {\bf 89} (2014), 042101. 


\bibitem{WA}
M.~Waegell, P.K.~Aravind,
Parity proofs of the Kochen-Specker theorem based on the Lie algebra E8.
J. Phys. A: Math. Theor. {\bf 48} (2015), 225301.

\bibitem{WA-preprint}
M.~Waegell, P.K.~Aravind,
The minimum complexity of Kochen-Specker sets does not scale with dimension.
Phys. Rev. A {\bf 95} (2017), 050101.

\end{thebibliography}
\end{document}